\definecolor{amaranth}{rgb}{0.9, 0.17, 0.31}
\definecolor{bluegray}{rgb}{0.4, 0.6, 0.8}
\newtheorem*{maintheorem*}{Main Theorem}
\newtheorem{theorem}{Theorem}[section]
\newtheorem{proposition}[theorem]{Proposition}
\newtheorem{lemma}[theorem]{Lemma}
\newtheorem*{theorem*}{Theorem}
\newtheorem{remark}[theorem]{Remark}
\newtheorem*{example*}{Example}
\newtheorem*{conjecture*}{Conjecture}
\def\1{\mathbf 1}
\def\j{\mathbf j}
\def\0{\mathbf 0}
\def\cB{\mathcal B}
\def\cC{\mathcal C}
\def\cL{\mathcal L}
\def\cP{\mathcal P}
\def\cX{\mathcal X}
\def\cS{\mathcal S}
\def\<{\langle}
\def\>{\rangle}
\newcommand\comment[1]{}
\newcommand*{\shifttext}[2]{
  \settowidth{\@tempdima}{#2}
  \makebox[\@tempdima]{\hspace*{#1}#2}
}
\newcommand\redsout{\bgroup\markoverwith{\textcolor{amaranth}{\rule[0.5ex]{2pt}{0.4pt}}}\ULon}
\def\@fnsymbol#1{\ensuremath{\ifcase#1\or *\or \dagger\or \ddagger\or
   \mathsection\or \mathparagraph\or \|\or **\or \dagger\dagger
   \or \ddagger\ddagger \else\@ctrerr\fi}}
\title{Reconstructing a generalized quadrangle with a hemisystem from a $4-$class association scheme}
\author{Giusy Monzillo\thanks{The research was supported by the Italian National Group for Algebraic and Geometric Structures and their Applications (GNSAGA-INdAM).} 
\\
\small {\tt giusy.monzillo@unibas.it}\\
\small Dipartimento di Matematica, Informatica ed Economia\\[-0.8ex]
\small Universit\`a degli Studi della Basilicata\\[-0.8ex]
\small Viale dell'Ateneo Lucano 10 - 85100 Potenza (Italy)
\small Potenza, Italy\\
}
\date{}
\begin{document}


\maketitle

\thispagestyle{fancy}
\fancyhf{}
\renewcommand{\headrulewidth}{0pt}
\lhead{}

\begin{abstract}

In 2013, van Dam, Martin and Muzychuk  constructed a cometric $Q-$ antipodal $4-$class association scheme from a GQ of order $(t^2,t)$, $t$ odd,  which have a hemisystem. In this paper we characterize this scheme by its Krein array. The techniques which are used involve the triple intersection numbers introduced by Coolsaet and Juri\v{s}i\'c.

\end{abstract}

{\it Keywords: Association scheme, Finite geometry, Hemisystem}             

{\it Math. Subj. Class.: 05E30, 51E20}

\section{Introduction}

A (finite) {\em partial geometry}  is an incidence structure $\cS=(\cP,\cB, {\rm I})$, where  $\cP$ and $\cB$ are disjoint (nonempty) sets of objects called {\em points} and {\em lines}, respectively, and for which $\rm I\subseteq (\cP,\cB)\cup(\cB,\cP)$ is a symmetric point-line {\em incidence relation} satisfying the following axioms:
\begin{itemize} 
\item[ (i)] each point is incident with $t+1$ lines, and two distinct points are incident with at most one line,
\item[(ii)]  each line is incident with $s+1$ points, and two distinct lines are incident with at most one point,
\item[(iii)] if $x$ is a point and $L$ is a line not incident with $x$, then there are exactly $\alpha$ ($\alpha\ge 1$) pairs $(y, M)\in \cP\times \cB$ such that $x\, {\rm I}\, M\, {\rm I}\, y\, {\rm I}\,  L$.
\end{itemize}

The integers $s$, $t$ and $\alpha$ are the {\em parameters} of the partial geometry. When $\alpha=1$, the partial geometry $\cS$ is a {\em generalize quadrangle} (GQ), and it is said to have {\em order} $(s,t)$.  If $\cS$ has order $(s,t)$, then  $|\cP| = (s + 1)(st + 1)$ and $|\cB| = (t + 1)(st + 1)$.

Partial geometries were introduced by Bose \cite{bo} to generalize known characterization theorems for strongly regular graphs. Indeed, the point graph of a partial geometry, that is the graph with vertex set $\cP$ whose edges are the pairs of collinear points,  is a strongly regular graph, i.e., a $2-$class association scheme. Such strongly regular graphs are called {\em geometric}. A strongly regular graph is called {\em pseudo-geometric}  if it has the same parameters  as a geometric strongly regular graph. The fundamental problem posed by Bose is to establish when a pseudo-geometric graph is geometric. Contributions to this problem can be found, for example, in \cite{cgs,h80}.

On the other hand, when a generalized quadrangle satisfies certain properties, it is possible to  construct association schemes with more than two classes. Therefore, it is natural to ask if such association schemes are characterized by their parameters.

Payne \cite{payne} constructed a $3-$class association scheme starting from a generalized quadrangle with a quasi-regular point. Subsequently, Hobart and Payne \cite{HobPay} proved that an association scheme having the same parameters and satisfying an assumption about maximal cliques must be the above $3-$class scheme.
Ghinelli and L\"owe \cite{gl} defined a $4-$class association scheme on the points of a generalized quadrangle with a regular point, and they characterize the scheme by its parameters. Penttila and Williford \cite{pw} constructed an infinite family of $4-$class association schemes starting from  a generalized quadrangle with a doubly subtended subquadrangle. A characterization of these schemes can be found in \cite{ms}. 

van Dam, Martin and Muzychuk \cite{vdmm} constructed a cometric $Q-$antipodal $4-$class association scheme from a GQ of order $(t^2,t)$, $t$ odd,  which has a hemisystem, defined in Section \ref{sec_2}. The  Krein array is also given by the authors, and used in this paper in order to characterize the scheme. The techniques which are used involve the triple intersection numbers introduced by Coolsaet and Juri\v{s}i\'c \cite{cj}, and they basically retrace the approach already successfully adopted in \cite{ms}.

 The principal references on association schemes are \cite{bi,bcn}. 
 
 \bigskip 
The paper is structured as follows. Section \ref{sec_2} contains background information on hemisystems in generalized quadrangles as well as the parameters of the van Dam-Martin-Muzychuk $4-$class scheme arising from this geometry. Let $\cX=(X, \{R_i\}_{i=0,...,4})$ be a scheme with the same parameters. The data suggest that $R_1\cup R_2$ can be viewed as the collinearity between points in the dual of the GQ to be reconstructed. In Section \ref{sec_3} properties of maximal $\{0,1,2\}$-cliques of the scheme are explored. In particular, by  considering {\em triple intersection numbers}, we prove that for any pair in  $R_1\cup R_2$  there exists a unique maximal $\{0,1,2\}$-clique (of size $t+1$) containing it (Proposition \ref{lem_3}). In Section \ref{sec_4},  we reconstruct a GQ of order $(t,t^2)$, $t$ odd, having a set of points such that every line meets it in $(t+1)/2$ points. The dual of this GQ is the desired quadrangle (Theorem \ref{main}).

\section{Preliminaries}\label{sec_2}

An association scheme $\cX=(X,\{R_i\}_{i=0}^{d})$, with Krein parameters $q_{ij}^k$, is said to be \emph{cometric}, or \emph{$Q-$polynomial}, if there is an ordering of the primitive idempotents $E_0,\ldots,E_d$ such that $q_{ij}^k\neq0$ implies $k\leq i+j$, and $q_{ij}^{i+j}\neq 0$ for all $i,j,k$. This is equivalent to say that the matrix $L_1^*=(q_{1j}^k)_{k,j=0,\ldots,d}$ is tridiagonal. In this case, one can define the so-called \emph{Krein array} of $\cX$, i.e., the vector \[\{b_0^*,b_1^*,\ldots,b_{d-1}^*;c_1^*,c_2^*,\ldots,c_{d}^*\},\] where $b_i^*=q^i_{1i+1}$ and $c_i^*=q^i_{1i-1}$.
A cometric scheme is \emph{antipodal} if $b_i^*=c_{d-i}^*$ for all $i\neq \big \lfloor \frac{d}{2}\big \rfloor$.

Let $\cS$ be a GQ of order $(t^2,t)$, $t$ odd.  A {\em hemisystem} in $\cS$ is a set of lines  $U$ with the property that every point lies on exactly  $(t+1)/2$ lines of  $U$.  Since the complementary set $U'=\cB\setminus U$ of a hemisystem is also a hemisystem, $$|U|=|U'|=(t^3+1)(t+1)/2.$$ 

Let $\cL=U\cup U'$ and $C$ denote the set of all ordered pairs of distinct intersecting lines from $\cL$. Then, the following relations can be defined on $\cL$:
\begin{itemize}
\item[] $R_0=\{(\ell,\ell):\ell \in \cL\}$,
\item[] $R_1=C\cap ((U\times U)'\cup (U'\times U))$,
\item[] $R_2 =C\cap ((U\times U)\cup (U'\times U'))$,
\item[] $R_3 =((U \times U')\cup (U' \times U))-R_1$,
\item[] $R_4 =((U \times U)\cup (U' \times U'))-R_0-R_2$.
\end{itemize} 

According to Corollary 7.8 in \cite{vdmm}, $\cL$ together with the relations $R_0,R_1,\ldots,R_4$ gives rise to a cometric $Q-$antipodal $4-$class association scheme, whose Krein array is
\begin{equation}\label{Krein}
\{(s+t)(t-1),s^2/t,s(t-1)/t,1;1,s(t-1)/t,s^2/t,(s+t)(t-1)\},
\end{equation}
where $s=t^2-t+1$.

As usual, $n_i$ denotes the valency of the relation $R_i$, $p^k_{ij}$ are the intersection numbers of the scheme, and $P$ and $Q$ are the first and the second eigenmatrices of the scheme, respectively. 

Before calculations, some general considerations are needed.
By using ``dual" arguments with respect to those of the proof of Proposition 2.2.2 in \cite{bcn}, for a $d-$class association scheme, it is possible to compute the matrices $P$ and $Q$, and, in particular, the multiplicities, from one of the matrices $L_i^*=(q_{ij}^k)_{k,j=0,\ldots,d}$, $i=1,\dots,d$, having $d+1$ distinct eigenvalues. If the scheme is cometric, $L_1^*$, whose entries are completely determined by the Krein array,
has the desired property. Once the eigenmatrices are obtained, one can compute all the remaining parameters of the scheme by using Theorem 3.6 in \cite{bi}. 
In conclusion, when a scheme is cometric, it is possible to get all the parameters starting from the Krein array. 

By applying these arguments to our scheme $\cX=(\cL,\{R_i\}_{i=0}^{4})$, whose Krein array is (\ref{Krein}), with $s=t^2-t+1$ and $b=\binom{t+1}{2}$, we have

\vspace{.3in}

\begin{equation*}
 n_1=\frac{(t+1)(t^2+1)}2, \ \ \ \ \ n_2=\frac{(t-1)(t^2+1)}2,  \ \ \ \ \ n_3=\frac{t^2(t^2-1)}2, \ \ \ \ \ n_4=\frac{t^2(t^2+1)}2.
\end{equation*}

\vspace{.3in}

\hspace{-0.2in}
{\begin{minipage}{0.3in}\small
\[
P=\begin{pmatrix}
1 & n_1	 & n_2 & n_3 & n_4 \\[.05in]
1 & b	 & -\frac{(s+1)}2 & -b & \frac{(s-1)}2  \\[.05in]
1 & 0	 & t-1 & 0 & -t  \\[.05in]
1 & -b & -\frac{(s+1)}2 & b & \frac{(s-1)}2   \\[.05in]
1 & -n_1	 & n_2 & -n_3 & n_4
 \end{pmatrix},
 \]
\end{minipage}
}
\hspace{2.5in}
{\begin{minipage}{.5in}\small
\[
Q=\begin{pmatrix}
1 & 2n_2 & s(s+t) & 2n_2 & 1  \\[.05in]
1 & s-1	 & 0 & 1-s & -1 \\[.05in]
1 & -s-1 & 2s & -s-1 & 1 \\[.05in]
1 & -\frac{(s+t)}t	 & 0 & \frac{(s+t)}t & -1  \\[.05in]
1 & \frac{(s-t)}t	 & -\frac{2s}t & \frac{(s-t)}t & -1  
 \end{pmatrix}.
\]
\end{minipage}
}

\newpage 

\hspace{-0.35in}
\bigskip
\resizebox{.95\textwidth}{!}
{\begin{minipage}{\textwidth}
\begin{tabular}{l|ccccc}
$p^1_{ij}$ & \hspace{.01in}  1 &\hspace{.01in} 2 &\hspace{.01in} 3 &\hspace{.01in} 4\\[.1in]
\hline\\[.03in] 
1 & \hspace{-.1in} $0$ &\hspace{.01in} $\frac{t-1}2$ &\hspace{.01in}  $0$ &\hspace{.01in}  $tb$ \\[.1in]
 2 & \hspace{-.1in}  $\frac{t-1}2$ &\hspace{.01in} $0$ &\hspace{.01in}  $\frac{t(s-1)}2$ &\hspace{.01in} $0$  \\[.1in]
 3 & \hspace{-.1in}  $0$ &\hspace{.01in} $\frac{t(s-1)}2$ &\hspace{.01in}  $0$ &\hspace{.01in} $\frac{t^2(s-1)}2$ \\[.1in]
 4 & \hspace{-.1in}  $tb$ &\hspace{.01in} $0$ &\hspace{.01in}  $\frac{t^2(s-1)}2$ &\hspace{.01in} $0$
\end{tabular}
\end{minipage}
}
\hspace{-2.5in}
\resizebox{.95\textwidth}{!}{\begin{minipage}{\textwidth}
\begin{tabular}{l|ccccc}
$p^2_{ij}$ & \hspace{.01in} 1 &\hspace{.01in} 2 &\hspace{.01in} 3 &\hspace{.01in} 4\\[.1in]
\hline\\[.03in] 
1 & \hspace{-.1in} $\frac{t+1}2$  &\hspace{.01in} $0$ &\hspace{.01in}  $tb$ &\hspace{.01in}  $0$ \\[.1in]
 2 & \hspace{-.1in}  $0$ &\hspace{.01in}  $\frac{t-3}2$ &\hspace{.01in} $0$ &\hspace{.01in}  $\frac{t(s-1)}2$   \\[.1in]
 3 & \hspace{-.1in}   $tb$ &\hspace{.01in}  $0$ &\hspace{.01in} $\frac{t^2(s+1)}2$ &\hspace{.01in}$0$ \\[.1in]
 4 & \hspace{-.1in}  $0$ &\hspace{.01in}  $\frac{t(s-1)}2$ &\hspace{.01in} $0$ &\hspace{.01in}  $\frac{t^2(s-3)}2$  
\end{tabular}
\end{minipage}
}

\vspace{.5in}

\hspace{-0.35in}
\resizebox{.8\textwidth}{!}{\begin{minipage}{\textwidth}
\begin{tabular}{l|ccccc}
$p^3_{ij}$ & \hspace{.01in}  1 &\hspace{.01in} 2 &\hspace{.01in} 3 &\hspace{.01in} 4\\[.1in]
\hline\\[.03in] 
1 & \hspace{-.1in} $0$ &\hspace{0.1in} $\frac{t^2+1}2$ &\hspace{0.1in}  $0$ &\hspace{0.1in}  $\frac{t(t^2+1)}2$ \\[.1in]
 2 & \hspace{-.1in}  $\frac{t^2+1}2$ &\hspace{0.1in} $0$ &\hspace{0.1in}  $\frac{(t-2)(t^2+1)}2$ &\hspace{0.1in} $0$  \\[.1in]
 3 & \hspace{-.1in}  $0$ &\hspace{0.1in} $\frac{(t-2)(t^2+1)}2$ &\hspace{0.1in}  $0$ &\hspace{0.1in} $\frac{(s-1)(t^2+1)}2$ \\[.1in]
 4 & \hspace{-.1in}  $\frac{t(t^2+1)}2$ &\hspace{0.1in} $0$ &\hspace{0.1in}  $\frac{(s-1)(t^2+1)}2$ &\hspace{0.1in} $0$
\end{tabular}
\end{minipage}
}
\hspace{-1.5in} 
\resizebox{.8\textwidth}{!}{\begin{minipage}{\textwidth}
\begin{tabular}{l|ccccc}
$p^4_{ij}$ & \hspace{.01in}  1 &\hspace{.01in} 2 &\hspace{.01in} 3 &\hspace{.01in} 4\\[.1in]
\hline\\[.03in] 
1 & \hspace{-.1in} $\frac{(t+1)^2}2$  &\hspace{.01in} $0$ &\hspace{.01in}  $b(t-1)$ &\hspace{.01in}  $0$ \\[.1in]
 2 & \hspace{-.1in}  $0$ &\hspace{.01in}  $\frac{(t-1)^2}2$ &\hspace{.01in} $0$ &\hspace{.01in}  $\frac{(t-1)(s+1)}2$   \\[.1in]
 3 & \hspace{-.1in}   $b(t-1)$ &\hspace{.01in}  $0$ &\hspace{.01in} ${b(t-1)^2}$ &\hspace{.01in}$0$ \\[.1in]
 4 & \hspace{-.1in}  $0$ &\hspace{.01in}  $\frac{(t-1)(s+1)}2$ &\hspace{.01in} $0$ &\hspace{.01in}  $\frac{(s-1)(t^2+3)}2$  
\end{tabular}
\end{minipage}
}

\vspace{.5in}

\noindent
For the sake of completeness, the Krein paramaters are also reported here, although they will not directly contribute to the characterization of the scheme:

\vspace{.5in}

\hspace{-0.5in}
\resizebox{.75\textwidth}{!}{\begin{minipage}{\textwidth}
\begin{tabular}{l|ccccc}
$t q^1_{ij}$ & \hspace{.01in}  1 &\hspace{.01in} 2 &\hspace{.01in} 3 &\hspace{.01in} 4\\[.1in]
\hline\\[.03in] 
1 & \hspace{-.1in} $(t-1)s-2t$ &\hspace{0.1in} $s^2$ &\hspace{0.1in}  $0$ &\hspace{0.1in}  $0$ \\[.1in]
2 & \hspace{-.1in} $s^2$ &\hspace{0.1in} $(t-1)(s+1)s$ &\hspace{0.1in}  $s^2$ &\hspace{0.1in} $0$  \\[.1in]
3 & \hspace{-.1in} $0$ &\hspace{0.1in} $s^2$ &\hspace{0.1in}  $(t-1)s-2t$ &\hspace{0.1in} $t$ \\[.1in]
4 & \hspace{-.1in} $0$ &\hspace{0.1in} $0$ &\hspace{0.1in}  $t$ &\hspace{0.1in} $0$
\end{tabular}
\end{minipage}
}
\hspace{-1in} 
\resizebox{.75\textwidth}{!}{\begin{minipage}{\textwidth}
\begin{tabular}{l|ccccc}
$t q^2_{ij}$ & \hspace{.01in}  1 &\hspace{.01in} 2 &\hspace{.01in} 3 &\hspace{.01in} 4\\[.1in]
\hline\\[.03in] 
1 & \hspace{-.1in} $(t-1)s$  &\hspace{.01in} $(t-1)^2(s+1)$ &\hspace{.01in}  $(t-1)s$ &\hspace{.01in}  $0$ \\[.1in]
2 & \hspace{-.1in} $(t-1)^2(s+1)$ &\hspace{.01in}  $(t-1)(s+3)s$ &\hspace{.01in} $(t-1)^2(s+1)$ &\hspace{.01in}  $t$   \\[.1in]
3 & \hspace{-.1in} $(t-1)s$  &\hspace{.01in} $(t-1)^2(s+1)$ &\hspace{.01in}  $(t-1)s$ &\hspace{.01in}  $0$ \\[.1in]
4 & \hspace{-.1in} $0$ &\hspace{.01in}  $t$ &\hspace{.01in} $0$ &\hspace{.01in}  $0$  
\end{tabular}
\end{minipage}
}

\vspace{.5in}

\hspace{-0.25in}
\resizebox{.75\textwidth}{!}{\begin{minipage}{\textwidth}
\begin{tabular}{l|ccccc}
$t q^3_{ij}$ & \hspace{.01in}  1 &\hspace{.01in} 2 &\hspace{.01in} 3 &\hspace{.01in} 4\\[.1in]
\hline\\[.03in] 
1 & \hspace{-.1in} $0$ &\hspace{0.1in} $s^2$ &\hspace{0.1in}  $(t-1)s-2t$ &\hspace{0.1in}  $t$ \\[.1in]
2 & \hspace{-.1in} $s^2$ &\hspace{0.1in} $(t-1)(s+1)s$ &\hspace{0.1in}  $s^2$ &\hspace{0.1in} $0$  \\[.1in]
3 & \hspace{-.1in} $(t-1)s-2t$ &\hspace{0.1in} $s^2$ &\hspace{0.1in}  $0$ &\hspace{0.1in} $0$ \\[.1in]
4 & \hspace{-.1in} $t$ &\hspace{0.1in} $0$ &\hspace{0.1in}  $0$ &\hspace{0.1in} $0$
\end{tabular}
\end{minipage}
}
\hspace{-0.8in} 
\resizebox{.75\textwidth}{!}{\begin{minipage}{\textwidth}
\begin{tabular}{l|ccccc}
$q^4_{ij}$ & \hspace{.01in}  1 &\hspace{.01in} 2 &\hspace{.01in} 3 &\hspace{.01in} 4\\[.1in]
\hline\\[.03in] 
1 & \hspace{-.1in} $0$  &\hspace{.01in} $0$ &\hspace{.01in}  $t s-1$ &\hspace{.01in}  $0$ \\[.1in]
2 & \hspace{-.1in} $0$ &\hspace{.01in}  $(s+t)s$ &\hspace{.01in} $0$ &\hspace{.01in}  $0$   \\[.1in]
3 & \hspace{-.1in} $t s-1$  &\hspace{.01in} $0$ &\hspace{.01in}  $0$ &\hspace{.01in}  $0$ \\[.1in]
4 & \hspace{-.1in} $0$ &\hspace{.01in}  $0$ &\hspace{.01in} $0$ &\hspace{.01in}  $0$  
\end{tabular}
\end{minipage}
}

\vspace{0.5in}

\noindent
Our idea is to reconstruct the quadrangle $\cS$  with the hemisystem $U$ starting from the parameters of the scheme $\cX=(\cL,\{R_i\}_{i=0}^{4})$.

\section{Some properties of maximal $\{0,1,2\}$-cliques of $\cX$}\label{sec_3}

Coolsaet and Juri\v{s}i\'c \cite{cj} introduced some parameters that in a certain sense generalize the intersection numbers $p_{ij}^k$, so requiring \emph{greater regularity} of the scheme. We recall here some properties and observations from \cite{cj, ms}.

Let $xyu$ be a (ordered) triple of elements in  $X$, and  $l,m,n \in \{0,\ldots,d\}$. The {\em triple intersection number} $\left[ \begin{smallmatrix} x & y & u \\l & m & n \end{smallmatrix} \right]$ (or $[\, l\; m\; n \,]$ for short) denotes the number of vertices $z\in X$ such that 
\[
(x,z)\in R_l, \ \ (y,z)\in R_m, \ \ (u,z)\in R_n.
\] 

\noindent
Note that this symbol is invariant under permutations of its columns. Let $
(x,y)\in R_A$,  $(y,u)\in R_B$, $(u,x)\in R_C$, for some $A,B,C\in\{0,\ldots,d\}$.
The following identities hold:
\begin{equation}\label{eq_3}
\begin{array}{ccccccccc}
[\, 0\; m\; n \,] &+ &[\, 1\; m\; n \,]&+&\ldots &+&[\, d\; m\; n \,]& =&p^B_{mn}, \\[.02in]
[\, l\; 0\; n \,] &+ &[\, l\; 1\; n \,]&+&\ldots&+&[\, l\; d\; n \,]& =&p^C_{ln}, \\[.02in]
[\, l\; m\; 0 \,] &+& [\, l\; m\; 1 \,]&+&\ldots&+&[\, l\; m\; d \,]& =&p^A_{lm},
\end{array}
\end{equation}
for all $l,m,n\in\{0,\ldots,d\}$. It is worth observing that, when in one of the above equations the right-hand side is zero, then every triple intersection number on the left-hand side is zero too. This allows us to reduce considerably the forthcoming calculations (in particular in the next lemma).  Further, since
\begin{equation}\label{eq_2}
 [\, 0\; m\; n \,] = \delta_{m\,A}\delta_{n\, C},\ \ \  
 [\, l\; 0\; n \,] = \delta_{l\,A}\delta_{n\, B},\ \ \ 
 [\, l\; m\; 0 \,] = \delta_{l\,C}\delta_{m\, B}, 
\end{equation}
identities (\ref{eq_3}) reduce to
\begin{equation}\label{eq_4}
\begin{aligned}
\sum_{r=1}^{d}{[\, r\; m\; n \,]}& =p^B_{mn}-\delta_{mA}\delta_{n C}, \\[.02in]
\sum_{r=1}^{d}{[\, l\; r\; n \,]}& =p^C_{ln}-\delta_{lA}\delta_{n B},\\[.02in]
\sum_{r=1}^{d}{[\, l\; m\; r \,]}& =p^A_{lm}-\delta_{lC}\delta_{m B},
\end{aligned}
\end{equation}
for all $l,m,n\in\{1,\ldots,d\}$. 
Identities  (\ref{eq_4}) can be interpreted as a system of  equations in the $d^3$ non-negative unknowns $[\, l\; m\; n\,]$, and we refer to it as {\em the system of equations associated with the triplet} $ABC$.  We direct the reader to Coolsaet and Juri\v{s}i\'c \cite{cj} for more details on triple intersection numbers in an association scheme.

Note that the system is uniquely determined by its  constant terms array, which will be denoted by $\left(p^B_{mn}-\delta_{mA}\delta_{n C};p^C_{ln}-\delta_{lA}\delta_{n B};p^A_{ln}-\delta_{lC}\delta_{m B}\right)_{l,m,n\in\{0,\ldots,d\}}$. Instead of the (ordered) triple $xyu$, consider the triple $yxu$. Then, the system of linear equations associated with $A'B'C'=ACB$ is defined by the following constant terms array
\begin{equation}\label{eq_5}
\left(p^C_{mn}-\delta_{mA}\delta_{n B};p^B_{ln}-\delta_{lA}\delta_{n C};p^A_{ln}-\delta_{lB}\delta_{m C}\right).
\end{equation}  
The unknown $[\, i\; j\; k \,]$ in (\ref{eq_4}),  that represents  $\left[ \begin{smallmatrix} x & y & u \\i & j & k \end{smallmatrix} \right]$, corresponds to the unknown $[\, j\; i\; k \,]'$ in the system defined by (\ref{eq_5}), representing  $\left[ \begin{smallmatrix} y & x & u \\j & i & k \end{smallmatrix} \right]$.  Note that if $B=C$, the two systems coincide, so that $[\, i\; j\; k \,]=[\, j\; i\; k \,]'=[\, j\; i\; k \,]$, for all $i,j\in\{0,\ldots,d\}$. This yields more useful conditions on unknowns in (\ref{eq_4}).
\begin{lemma}\label{lem_2}
For the scheme $\cX$, the following identities are satisfied:
\begin{enumerate}
\item[i.] if  $(x,y),(y,u),(u,x)\in R_2$, then $[\, 2\; 2\; i\,] =0$  for  $i=1,3,4$ and $[\, 2\; 2\; 2\,] =\frac12(t-5)$ for $t\ge 5$;
\item[ii.] if $(x,y)\in R_2$ and $(y,u),(u,x)\in R_1$, then $[\, 1\; 1\; i\,] =0=[\, 2\; i\; 1\,]$ for  $i=1,3,4$,  $[\, 1\; 1\; 2\,] =\frac12(t-1)$ and $[\, 2\; 2\; 1\,] =\frac12(t-3)$.
\end{enumerate}
\end{lemma}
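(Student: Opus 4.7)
My plan is, for each of the two cases, to set up the Coolsaet--Juri\v{s}i\'c system (\ref{eq_4}) for the relevant triplet $(A,B,C)$, read off the right-hand sides from the $p^1_{ij}$ and $p^2_{ij}$ tables in Section~\ref{sec_2}, and exploit both the ``zero right-hand side forces zero left-hand sides'' cascade noted right after (\ref{eq_3}) together with whatever symmetries of the system are available for that triplet.

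For part i, with $(A,B,C)=(2,2,2)$, the full $S_3$ permutation symmetry applies and $\tri{l}{m}{n}$ depends only on the multiset $\{l,m,n\}$. The vanishing entries $p^2_{12}=p^2_{14}=p^2_{23}=p^2_{34}=0$, combined with this symmetry, propagate through (\ref{eq_4}) to force $\tri{l}{m}{n}=0$ whenever the multiset $\{l,m,n\}$ meets both $\{1,3\}$ and $\{2,4\}$; in particular $\tri{2}{2}{1}=\tri{2}{2}{3}=0$ immediately. The surviving unknowns coming from the multisets $\{2,2,2\},\{2,2,4\},\{2,4,4\},\{4,4,4\}$ satisfy the three linear equations $\sum_r \tri{2}{2}{r}=p^2_{22}-1=(t-5)/2$, $\sum_r \tri{2}{4}{r}=p^2_{24}$, and $\sum_r \tri{4}{4}{r}=p^2_{44}$. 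One additional linear relation, coming from a vanishing Krein parameter $q^\gamma_{\alpha\beta}=0$ in the tables of Section~\ref{sec_2} via the identity $\sum_{l,m,n} Q_{l\alpha}Q_{m\beta}Q_{n\gamma}\tri{l}{m}{n}=0$, then closes the system and pinpoints $\tri{2}{2}{2}=(t-5)/2$ and $\tri{2}{2}{4}=0$ for $t\geq 5$.

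For part ii, with $(A,B,C)=(2,1,1)$, only the reduced symmetry $\tri{l}{m}{n}=\tri{m}{l}{n}$ is available. A broader family of zeros, notably $p^1_{11}=p^1_{13}=p^1_{22}=p^1_{24}=p^1_{33}=p^1_{44}=0$ in the $p^1$ table and $p^2_{12}=p^2_{14}=p^2_{23}=p^2_{34}=0$ in the $p^2$ table (each supplemented by its symmetric counterpart), cascades to kill most of the triple intersection numbers. From $p^1_{11}=p^1_{13}=p^1_{31}=0$ we get $\tri{r}{1}{1}=\tri{r}{1}{3}=\tri{r}{3}{1}=0$ for every $r$, which immediately delivers $\tri{1}{1}{1}=\tri{1}{1}{3}=\tri{2}{1}{1}=\tri{2}{3}{1}=0$; applying the third equation of (\ref{eq_4}) at $(l,m)=(1,2),(1,4),(2,3),(3,4),(4,2)$ combined with the $l\leftrightarrow m$ symmetry forces the further zero $\tri{2}{4}{1}=0$. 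The two remaining nontrivial sums $\sum_r \tri{1}{1}{r}=p^2_{11}-1=(t-1)/2$ and $\sum_r \tri{2}{2}{r}=p^2_{22}=(t-3)/2$ then yield $\tri{1}{1}{2}=(t-1)/2$ and $\tri{2}{2}{1}=(t-3)/2$ once the companion zero $\tri{1}{1}{4}=0$ has been established, which I would again pin down via the Krein identity attached to a suitable zero Krein parameter (for instance, $q^4_{11}=0$).

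The main obstacle I anticipate is the sheer bookkeeping in part ii, where many interlinked sparse equations and a reduced symmetry must be tracked simultaneously; the decisive moment is to identify a vanishing Krein parameter whose associated linear identity is \emph{not} already a consequence of (\ref{eq_4}) and therefore genuinely closes the one or two residual degrees of freedom left over after the sparsity cascade. Once that identity is chosen, the rest is routine linear algebra, with non-negativity of the triple intersection numbers providing a useful sanity check and explaining the lower bound $t\geq 5$ imposed in part i.
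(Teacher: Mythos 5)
Your overall strategy is the same as the paper's: combine the sum identities (\ref{eq_4}) with the positional symmetries of the triplet and with the Coolsaet--Juri\v{s}i\'c identities attached to vanishing Krein parameters, then finish by linear algebra (the paper hands the combined system to Mathematica, using the vanishing of $q^k_{ij}$ for $(i\,j\,k)=(1\,1\,3),(1\,1\,4),(1\,4\,2),(1\,4\,4)$ and permutations). However, two of your concrete ``sparsity cascade'' claims in part ii are wrong, and they bear directly on statements of the lemma. First, your derivation of $\tri{2}{4}{1}=0$ fails: the third equation of (\ref{eq_4}) at $(l,m)=(4,2)$ has right-hand side $p^2_{42}=t(s-1)/2\neq 0$, so it forces nothing, and in fact none of the six sums through $\tri{2}{4}{1}$ (even after invoking $\tri{l}{m}{n}=\tri{m}{l}{n}$) has a zero right-hand side, since $p^1_{41}=tb$, $p^1_{21}-1=(t-3)/2$ and $p^2_{24}=t(s-1)/2$ are all nonzero. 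The paper's own computation confirms that $\tri{2}{4}{1}$ is \emph{not} determined by (\ref{eq_4}) plus symmetry: even after adjoining the Krein equations it remains an affine function of the residual free parameter $\tri{1}{3}{4}$ and is only pinned down at the very end. Second, to extract $\tri{2}{2}{1}=(t-3)/2$ from $\sum_r\tri{2}{2}{r}=p^2_{22}=(t-3)/2$ you also need $\tri{2}{2}{3}=\tri{2}{2}{4}=0$; the cascade gives $\tri{2}{2}{2}=0$ (from $p^1_{22}=0$) but not the other two, and you never address them. Both of these gaps must be closed by the Krein identities, i.e., exactly the machinery you defer to ``a suitable vanishing Krein parameter'' for $\tri{1}{1}{4}$ alone.

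In part i your cascade is correct, but your list of surviving unknowns is incomplete: besides the multisets inside $\{2,4\}$, the multisets $\{1,1,1\},\{1,1,3\},\{1,3,3\},\{3,3,3\}$ also survive, and they cannot be discarded because they enter the Krein identities you intend to use --- indeed, in the paper's solution $\tri{2}{2}{4}$ is expressed as a combination involving $\tri{1}{1}{3}$, whose vanishing is itself part of what must be proved (it follows there either from the Krein equations or from non-negativity, since the solution gives $\tri{2}{2}{4}=-\tri{1}{1}{3}$ once the cascade zeros are substituted). Finally, your expectation that a single extra Krein relation closes each system is optimistic: the paper uses four families of such relations together with all permutations, and in part ii sixteen residual unknowns must be resolved. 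So the skeleton of your argument matches the paper, but as written the proof of $\tri{2}{i}{1}=0$ for $i=4$ and of $\tri{2}{2}{1}=(t-3)/2$ does not go through without substantially more input from the Krein conditions.
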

 \begin{proof}
 As $\cX$ is a $Q-$polynomial $Q-$antipodal scheme with $q^k_{ij}=0$, for $( i\, j\, k)= (1\,1 \, 3) , (1\,1 \, 4),  (1\,4 \, 2), (1\,4\, 4)$   and their permutations, by \cite[Theorem 3]{cj}, we also have
\begin{equation}\label{eq_6}
\sum_{l,m,n=1}^{4}{Q_{lr}Q_{ms}Q_{nt}[\, l\; m\; n\,]}=-Q_{0r}Q_{As}Q_{Ct}-Q_{Ar}Q_{0s}Q_{Bt}-Q_{Cr}Q_{Bs}Q_{0t},
\end{equation}
for $(r\, s\, t)=  (1\,1 \, 3) , (1\,1 \, 4),  (1\,4 \, 2), (1\,4\, 4)$   and their permutations. Also identities (\ref{eq_6}) can be interpreted as a system of equations in the $4^3=64$ non-negative unknowns $[\, l\; m\; n\,]$. 

For $A=B=C=2$,  we widen the system (\ref{eq_4}) with  the identities $[\,l\;m\;n\,]=[\,\sigma(l)\;\sigma(m)\;\sigma(n)\,]$, for any permutation $\sigma$ on symbols $l,m,n$, and  Equations (\ref{eq_6}). 
Handing the above equations  to the computer algebra system Mathematica \cite{math}, we obtain their space of solutions,  depending  on $[\, 1\; 1\; 2\,]$, $[\, 1\; 1\; 3\,]$, $[\, 1\; 1\; 4\,]$, $[\, 1\; 2\; 2\,]$,  $[\, 1\; 2\; 3\,]$, $[\, 1\; 3\; 4\,]$, $[\, 2\; 3\; 4\,]$,  which are all zero for $t\ge 5$. In particular,  
\[
\begin{array}{rcl}
[\, 2\; 2\; 1\,] & = & [\, 1\; 2\; 2\,]=0,\\[.06in]
[\, 2\; 2\; 3\,] & = &
0,\\[.09in]
[\, 2\; 2\; 4\,] & = & -\frac{(5t+1)(t-1)}{(t+1)^2}[\, 1\; 1\; 2\,]-[\, 1\; 1\; 3\,]-2\frac{(t-1)^2}{(t+1)^2}([\, 1\; 1\; 4\,]+[\, 1\; 2\; 3\,])\\[.09in]
&&-\frac{t-1}{t+1}[\, 1\; 2\; 2\,] +2\frac{t-3}{(t+1)^2}[\, 1\; 3\; 4\,]-2\frac{1}{t+1}[\, 2\; 3\; 4\,]\\[.09in]&=&0,\\[.09in]
[\, 2\; 2\; 2\,] & = & \frac12(t-5)+2\frac{t^3-7t^2+t+1}{(t-1)(t+1)^2}[\, 1\; 1\; 2\,]+[\, 1\; 1\; 3\,]+\frac{t^2-6t+1}{(t+1)^2}[\, 1\; 1\; 4\,]+
\frac{t^2+3}{t^2-1}[\, 1\; 2\; 2\,]\\[.09in]
&&-\frac{8t}{t^2+1}[\, 1\; 2\; 3\,]+4\frac{3t-1}{(t-1)(t+1)^2}[\, 1\; 3\; 4\,]-4\frac{1}{(t-1)(t+1)}[\, 2\; 3\; 4\,]\\[.09in]&=&\frac12(t-5).
\end{array}
\]
2. For $A=2$, $B=C=1$,  we widen the system (\ref{eq_4}) with  the identities $[\,l\;m\;n\,]=[\,m\;l\;n\,]$, and  Equations (\ref{eq_6}). 
As before,  thanks to  Mathematica \cite{math}, we obtain their space of solutions depending on $[\, 1\; 2\; 1\,]$, $[\, 1\; 2\; 3\,]$, $[\, 1\; 2\; 4\,]$, $[\, 1\; 3\; 2\,]$,   $[\, 1\; 3\; 4\,]$, $[\, 1\; 4\; 1\,]$, $[\, 1\; 4\; 2\,]$, $[\, 1\; 4\; 3\,]$, $[\, 2\; 3\; 1\,]$, $[\, 2\; 3\; 2\,]$, $[\, 2\; 3\; 4\,]$, $[\, 2\; 4\; 4\,]$, $[\, 3\; 4\; 1\,]$, $[\, 3\; 4\; 2\,]$, $[\, 3\; 4\; 4\,]$, $[\, 4\; 4\; 2\,]$, which are all zero except for $[\, 1\; 3\; 4\,]=\frac12t^2(t+1)$. This implies  
\[
\begin{array}{rcl}
[\, 1\; 1\; 1\,]
&=& 0,\\[.09in]
[\, 1\; 1\; 3\,] 
&=& 0,\\[.09in]
[\, 1\; 1\; 4\,]& = & \frac12t^2(t+1)-[\, 1\; 3\; 4\,]\\[.09in]
&=& 0,\\[.09in]
[\, 1\; 1\; 2\,] & = & \frac12{(t-1)}.
\end{array}
\]
Furthermore, we have  
\[
\begin{array}{rcl}
[\, 2\; 1\; 1\,] &= & [\, 1\; 2\; 1\,]= 0,\\[.09in]
[\, 2\; 4\; 1\,]& = & \frac{t^2(t+1)(t-1)^2}{2(t^3+1)}+\frac{(t-1)^2}{t^3+1}[\, 1\; 3\; 4\,]\\[.09in]
&=& 0,\\[.09in]
[\, 2\; 2\; 1\,] & = & -\frac{t^2(t+1)(t-1)^2}{2(t^3+1)}-\frac{(t-1)^2}{t^3+1}[\, 1\; 3\; 4\,]\\[.09in]
&=& \frac12{(t-3)}.
\end{array}
\]
\end{proof}

Let $\cX=(X,\{R_i\}_{i=0}^{d})$ be a (symmetric) association scheme. 
\noindent

For $F \subseteq \{ 0,1,\ldots,d\}$, an \emph{$F$-clique in $\mathcal{X}$} is any $C \subseteq X$ with $C \times C \subseteq \cup_{f \in F} R_f$. We now apply the above results to particular $F$-cliques in $\mathcal{X}$.

For any $x,y \in X$ and $i,j=0,\ldots, d$, we set $\cP^{(x,y)}_{i,j}=\{z\in X:(x,z)\in R_i, (y,z)\in R_j\}$.

\begin{proposition}\label{lem_3}
Let $x,y\in X$ with $(x,y)\in R_2$. Then, there exists a unique maximal $\{0,2\}$-clique (of size $(t+1)/2$) containing $x$ and $y$, and a unique $\{0,2\}$-clique (of size $(t+1)/2$) whose vertices are all $1-$related to both $x$ and $y$. These cliques are the sets $C=\{x,y\}\cup\cP^{(x,y)}_{2,2}$ and $C'=\cP^{(x,y)}_{1,1}$, where $\cP^{(x,y)}_{i,i}=\{z\in X:(x,z),(y,z)\in R_i\}$, respectively.
\end{proposition}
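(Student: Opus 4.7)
The plan is to derive both assertions directly from Lemma \ref{lem_2}, together with the values $p^2_{2,2}=(t-3)/2$ and $p^2_{1,1}=(t+1)/2$ read off from the intersection-number tables of Section \ref{sec_2}. The key observation is that the vanishing of the triple intersection numbers $[\,2\;2\;i\,]$ and $[\,1\;1\;i\,]$ for $i\in\{1,3,4\}$ will force the relations between any two vertices of $\cP^{(x,y)}_{2,2}$ (resp.\ $\cP^{(x,y)}_{1,1}$) to be $R_{0}$ or $R_{2}$, so that each of these sets gives rise to a clique of the required type.

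For the first clique, I would set $C=\{x,y\}\cup\cP^{(x,y)}_{2,2}$ and note immediately that $|C|=2+p^{2}_{2,2}=(t+1)/2$. All pairs involving $x$ or $y$ lie in $R_{2}$ by definition of $\cP^{(x,y)}_{2,2}$ and the hypothesis $(x,y)\in R_{2}$. For distinct $z,z'\in\cP^{(x,y)}_{2,2}$, the triple $(x,y,z)$ satisfies $(x,y),(y,z),(z,x)\in R_{2}$, so Lemma \ref{lem_2}(i) applies and gives $[\,2\;2\;i\,]=0$ for $i=1,3,4$. Since this quantity counts the $w$ with $(x,w),(y,w)\in R_{2}$ and $(z,w)\in R_{i}$, i.e.\ the elements $w\in\cP^{(x,y)}_{2,2}$ that are $i$-related to $z$, no such $w$ exists; hence $(z,z')\in R_{2}$ and $C$ is a $\{0,2\}$-clique. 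Maximality and uniqueness are then formal: any $u$ that is $R_{2}$-related to both $x$ and $y$ lies in $\cP^{(x,y)}_{2,2}$ by definition, so any $\{0,2\}$-clique containing $\{x,y\}$ is contained in $C$, and conversely any maximal one must equal $C$.

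For the second clique I would set $C'=\cP^{(x,y)}_{1,1}$, so $|C'|=p^{2}_{1,1}=(t+1)/2$. For distinct $z,z'\in C'$, the triple $(x,y,z)$ satisfies $(x,y)\in R_{2}$ and $(y,z),(z,x)\in R_{1}$, so Lemma \ref{lem_2}(ii) yields $[\,1\;1\;i\,]=0$ for $i=1,3,4$; exactly as above this forces $(z,z')\in R_{2}$, so $C'$ is indeed a $\{0,2\}$-clique. Uniqueness follows by a dimension count: any $\{0,2\}$-clique whose elements are all $1$-related to both $x$ and $y$ is contained in $\cP^{(x,y)}_{1,1}=C'$, and equality of cardinalities forces it to coincide with $C'$.

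The real obstacle has already been surmounted in Lemma \ref{lem_2}; once those triple intersection numbers are pinned down, this proposition is essentially a bookkeeping exercise. The only point worth being careful about is the combinatorial translation of the vanishing of $[\,2\;2\;i\,]$ (resp.\ $[\,1\;1\;i\,]$) into a statement about the relation between $z$ and the remaining elements of $\cP^{(x,y)}_{2,2}$ (resp.\ $\cP^{(x,y)}_{1,1}$), which is immediate because $\cP^{(x,y)}_{2,2}$ and $\cP^{(x,y)}_{1,1}$ are by definition exactly the sets of vertices contributing to the first two indices of the triple intersection symbol.
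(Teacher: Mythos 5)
Your proposal is correct and follows essentially the same route as the paper: compute $|C|=2+p^2_{2,2}$ and $|C'|=p^2_{1,1}$, then use the vanishing of $[\,2\;2\;i\,]$ and $[\,1\;1\;i\,]$ for $i\in\{1,3,4\}$ from Lemma \ref{lem_2} to force all remaining pairs into $R_2$, with uniqueness following from the containment of any such clique in $\cP^{(x,y)}_{2,2}$ or $\cP^{(x,y)}_{1,1}$. The only cosmetic difference is that the paper first eliminates $k=1,3$ via $p^k_{22}=p^k_{11}=0$ and only invokes the triple intersection number for $k=4$, and it notes separately that the case $t=3$ is trivial for $C$ since $\cP^{(x,y)}_{2,2}$ is then empty — a case your argument covers vacuously.
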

\begin{proof}
 Clearly, $|\cP^{(x,y)}_{i,i}|=p^{2}_{i,i}=\frac{t-3}{2} +2-i$, with $i=1,2$. Let $t\ge 5$. Assume there exists a pair of distinct elements  $z,z'\in \cP^{(x,y)}_{2,2}$, with $(z,z')\in R_k$, $k\in\{1,3,4\}$.  Since $p^k_{22}=0$, for $k=1,3$, it follows that $k=4$. Then, $xyzz'$ is a $4-$tuple such that $x,y,z$ are mutually $2-$related and 
\[
(z',x),(z',y)\in R_2, \ \ (z',z)\in R_4.
\] 
This yields that the number $[\, 2\; 2\; 4\,]$ should be nonzero. But this contradicts  Lemma \ref{lem_2}.i. Then, $\{x,y\}\cup\cP^{(x,y)}_{2,2}$ is the unique maximal $\{0, 2\}$-clique of size $2+p^2_{22}=(t+1)/2$ containing $x$ and $y$. If $t=3$, as $p^2_{22}=0$, the latter assertion is trivially true. 

Next assume there exists a pair of distinct elements  $z,z'\in \cP^{(x,y)}_{1,1}$, with $(z,z')\in R_k$, $k\in\{1,3,4\}$. Since $p^k_{11}=0$, for $k=1,3$,  it derives that $k=4$. Then, $xyzz'$ is a $4-$tuple such that $(x,y)\in R_2$,$(x,z),(y,z)\in R_1$ and 
\[
(z',x),(z',y)\in R_1, \ \ (z',z)\in R_4.
\] 
This yields that the number $[\, 1\; 1\; 4\,]$ should be nonzero. But this contradicts  Lemma \ref{lem_2}.ii. Then, $\cP^{(x,y)}_{1,1}$ is the unique maximal $\{0,2\}$-clique of size $p^2_{11}=(t+1)/2$ with the desired property.
\end{proof}
\begin{remark}\label{rem_1}{\em 
From Lemma \ref{lem_2}.ii., the set $C\cup C'$ can be constructed also by starting from a pair  $(x,u)\in R_1$, with $x\in C$ and $u\in C'$. In fact,  $C=\{x\}\cup\cP^{(x,u)}_{2,1}$ and $C'=\{u\}\cup\cP^{(x,u)}_{1,2}$.}
\end{remark}
It follows that $C\cup C'$ is a maximal $\{0,1,2\}-$clique of size $t+1$, and from now  ``clique'' will stand for  such a clique.

\comment{
Let $Y$ be a subset of  $X$.  We say that  $z\in X$ is $i-$related to $Y$, if  there is $x\in Y$ such that $(x,z)\in R_i$. 
\begin{lemma}\label{lem_7}
Let $C$ be a clique and $z\notin C$  be $3-$related to $C$.  Then, $z$ is $3-$related to precisely one point in $C$ and one point in $C'$. 
 In addition, the set of points $z\notin  C$  which are $3-$related to $C$ has size $r^3(r-1)$.
\end{lemma}
\begin{proof}
The uniqueness of the point in $C$ $3-$related to $z$ follows from Lemma \ref{lem_3}.
Let $u\in C$ such that $(z,u)\in R_3$. Because of the previous results, $u$ lies on $r^2$ cliques, $C$ included, different from the one containing $\{u,z\}$. To prove that there exists exactly one point in $C'$ $3-$related to $z$ means to prove that there exists exactly one point in $C$ $1-$related to $z$. Suppose there exist two distinct points in $C$ $1-$related to $z$, then there would be two distinct points in $C'$ $3-$related to $z$, and this cannot happen. So there is at most one point in $C$ $1-$related to $z$. As $p^3_{31}=r^2$, there exists exactly one point $v  \in C$  such that $(z,v)\in R_1$. 

Now we count the points $z\notin  C$  which are $3-$related to $C$.
Any point in $C$ is $3-$related to $n_3-(r-1)=r^2(r-1)$ points not in $C$. As a $z\notin  C$ $3-$related to $C$  is $3-$related to exactly one point on $C$, the set of all these points $z$ has size $r( n_3-(r-1))=r^3(r-1)$.
\end{proof}
\begin{remark}\label{rem_1}
From the proof of the previous result we note that if $z\notin C$ is $3-$related to $C$, then there are precisely two points $x,y\in C$ such that $(x,z)\in R_3$ and $(y,z)\in R_1$. 
\end{remark} 
\begin{proposition}\label{prop_2}
Let $C$ be a clique in $\cX$ and $\Delta_C$ be the set
\[
\Delta_C=\{z: (z,x)\in R_2, \mathrm{\ for\  all\ } x\in C \}.
\]
Then, $T_C=  \Delta_C\cup C \cup C'$ is a set of $r^3-r^2$ points. Furthermore, for every $z\in\Delta_C$, $|R_1(z) \cap\Delta_C|=|R_3(z) \cap\Delta_C|=r-1$ and $z' \in \Delta_C$. 
\end{proposition}
\begin{proof}
By Lemma \ref{lem_7}, $\Delta_C$ has size $|X|- r^3(r-1)-2 r$, thus  $T_ C$ consists of $r^3-r^2$ points.

Fix $z\in \Delta_C$. Since for any given $y\notin T_C$ there is exactly one point in $C$ $3-$related to $y$, $C$ provides a partition of the points not in $T_C$ which are $3-$related to $z$ in $r$ sets of size $p^2_{33}=r^2-r$. Therefore, the points of $\Delta_C$ which are $3-$related to $z$ are $n_3-rp^2_{33}=r-1$. Similar arguments show that the points of $\Delta_C$ which are $1-$related to $z$ are $n_1-rp^2_{11}=r-1$. As $(z,x)\in R_2$ if and only if $(z',x)\in R_2$, it follows $z'\in \Delta_C$.
\end{proof}
}

\section{Reconstructing the generalized quadrangle from the scheme}\label{sec_4}

We now formulate and prove the main result.
 
\begin{theorem} \label{main}
Let $\cC$ be the set of all cliques $C\cup C'$ constructed in the previous section. 
 Then, the incidence structure $\cS=(\cC,X,{\rm I})$, where the incidence relation ${\rm I}$ is the containment, is a GQ of order $(t^2,t)$. Moreover, the set $U=\{x\}\cup R_2(x)\cup R_4(x)$, where $x\in X$ is arbitrarily chosen, is a hemisystem of $\cS$. 	
\end{theorem}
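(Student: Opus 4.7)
The plan is to verify that $\cS=(\cC,X,{\rm I})$ satisfies the three axioms of a generalized quadrangle directly, and then to identify the set $U$ from the spectrum of the single matrix $A_2+A_4$.

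First I would dispose of the combinatorial parameters. Every clique has exactly $t+1$ elements by Proposition \ref{lem_3} and Remark \ref{rem_1}, so each point of $\cS$ lies on $t+1$ lines. For a fixed $x\in X$, each clique through $x$ accounts for $(t-1)/2$ of its $R_2$-neighbors (the half containing $x$), so dividing the valency $n_2=(t-1)(t^2+1)/2$ by $(t-1)/2$ yields $t^2+1$ cliques through $x$, the right number of points per line for order $(t^2,t)$. The uniqueness statement of Proposition \ref{lem_3}, together with Remark \ref{rem_1}, says that every pair in $R_1\cup R_2$ sits in a unique clique; hence two distinct cliques share at most one element of $X$, and two distinct elements of $X$ lie in at most one common clique.

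The heart of the GQ verification is the axiom $\alpha=1$. For a clique $K$ and $x\notin K$, set $m(x)=|\{y\in K:(x,y)\in R_1\cup R_2\}|$. Because the clique through an $R_1\cup R_2$-pair is unique, $m(x)$ equals the number of lines of $\cS$ through $x$ meeting $K$, so the axiom is equivalent to $m(x)=1$. A first-moment count, obtained by exchanging summation order, gives
\[
\sum_{x\notin K}m(x)=(t+1)(n_1+n_2-t)=(t+1)t^3=|X\setminus K|.
\]
For the second moment I would read off the intersection-number tables in Section \ref{sec_2}: for any $y_1\ne y_2\in K$ with $(y_1,y_2)\in R_A$, $A\in\{1,2\}$, the count $p^A_{11}+p^A_{12}+p^A_{21}+p^A_{22}$ of common $R_1\cup R_2$-neighbors equals $t-1$; a direct enumeration inside $K$ (splitting each candidate according to the half $C$ or $C'$ it belongs to) shows that all $t-1$ of them already lie in $K$. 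Hence the off-diagonal contribution to $\sum_{x\notin K}m(x)^2$ vanishes and that sum also equals $(t+1)t^3$. Consequently $\sum_{x\notin K}m(x)(m(x)-1)=0$, so $m(x)\in\{0,1\}$ for every $x\notin K$, and the first-moment identity forces $m(x)=1$ uniformly.

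For the hemisystem, the first eigenmatrix $P$ of $\cX$ shows that $A_2+A_4$ has only two distinct eigenvalues: the valency $n_2+n_4$, with multiplicity $m_0+m_4=2$, and $-1$, with multiplicity $|X|-2$. A graph with this spectrum is a disjoint union of two cliques of equal size $n_2+n_4+1=(t+1)(t^3+1)/2=|X|/2$; hence $X=U\sqcup U'$ with $U=\{x\}\cup R_2(x)\cup R_4(x)$. For any $K=C\cup C'\in\cC$, pairs inside $C$ or inside $C'$ lie in $R_0\cup R_2\subseteq R_0\cup R_2\cup R_4$, while cross pairs lie in $R_1$; so each half sits entirely in one component of the graph $R_2\cup R_4$ and the two halves lie in different components. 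Exactly one of $C,C'$ is contained in $U$, giving $|K\cap U|=(t+1)/2$ as the hemisystem condition requires. The main obstacle I expect is the pairwise vanishing identity in the second-moment step: it relies on an exact numerical match between the entries $p^A_{ij}$ and the internal structure of a $\{0,1,2\}$-clique, and without that cancellation the uniqueness half of the GQ axiom cannot be extracted from counting alone.
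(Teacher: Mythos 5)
Your proposal is correct, but it reaches the two key conclusions by routes genuinely different from the paper's. For the GQ axiom the paper fixes $z\in C\cup C'$ with $(x,z)\in R_4$ (and then $R_3$) and runs a case analysis driven by the vanishing pattern of the $p^4_{ij}$ and $p^3_{ij}$, concluding directly that $x$ is $1$- or $2$-related to exactly one element of the clique; you instead compute the first and second moments of $m(x)$ over $x\notin K$, using only that $\sum_{i,j\in\{1,2\}}p^A_{ij}=t-1$ for $A\in\{1,2\}$ and that all $t-1$ common $R_1\cup R_2$-neighbours of a pair in $K$ already lie in $K$, and force $m\equiv 1$ from the vanishing variance. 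This is the classical Bose-style device for recognising geometric graphs: it is shorter and avoids relation-by-relation bookkeeping, at the price of giving less explicit information about how points off a line attach to it (which the paper's case analysis does provide). For the hemisystem, the paper proves $U_x=U_y$ for $y\in U_x$ combinatorially from $p^2_{12}=p^2_{32}=p^2_{14}=p^2_{34}=0$ and the analogous identities at $p^4_{ij}$, whereas you read off from the eigenmatrix $P$ that $A_2+A_4$ has spectrum $\bigl\{(n_2+n_4)^{2},(-1)^{|X|-2}\bigr\}$ and is therefore a disjoint union of two cliques of size $|X|/2$; this is the same conclusion obtained spectrally, and your final step (each $C\cup C'$ has one half in each class because cross-pairs lie in $R_1$) coincides with the paper's. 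Both arguments are complete; yours trades explicit intersection-number manipulation for global counting and spectral identities.
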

\begin{proof}
Since there is a point-line duality for a GQ, for which in any definition or theorem the words ``point'' and ``line'' are interchanged, we will prove the theorem in the   dual GQ $\cS^D=(X,\cC,{\rm I})$ of order $(t,t^2)$. By Lemma \ref{lem_2},  each clique has size $t+1$, and for each $(x,y)\in R_1\cup R_2$ there is a unique clique. Since $n_1+n_2=t(t^2+1)$, by Proposition \ref{lem_3} and Remark \ref{rem_1}, it is evident that  there are $t^2+1$ cliques through each $x\in X$, and two cliques intersect in at most one point.

Let $x\in X$ and $C\cup C'\in \cC$, with $x\notin C\cup C'$. Suppose there is $z\in C\cup C'$ such that $(x,z)\in R_4$, and consider $\cP^{(z,x)}_{i,j}=\{u\in X:(u,z)\in R_i, (u,z)\in R_j\}$ with $|\cP^{(z,x)}_{i,j}|=p^4_{ij}$, $i,j\in\{1,2\}$. Note that $p^4_{12}=p^4_{2,1}=0$, $p^4_{22}+p^4_{1,1}=\frac12 (t-1)^2+\frac12(t+1)^2=t^2+1$. We show that if $(z,x)\in R_4$, then $x$ is $2-$related to a unique point on $p^4_{22}$ cliques through $z$, and $1-$related to a unique point on the remaining cliques through $z$. The first part of the assertion follows from the fact that if $x$ is  $2-$related to more then one element of the same clique through $z$, then it would be on that clique by Proposition \ref{lem_3}. It remains to check that neither $(x,z_1),(x,z_2)\in R_1$ nor $(x,z_1)\in R_1$, $(x,z_2)\in R_2$ with $z_1\neq z_2$ on the same clique through $z$ can exist. Suppose it is possible. In the first case, as $p^1_{11}=0$, we would have $(z_1,z_2)\in R_2$ which would imply $x$ belonging to the clique on $z$ with $z_1$ and $z_2$, by Proposition \ref{lem_3}. In the second case, as  $p^2_{21}=0$, $z_1$ and $z_2$ would be $1-$related to each other, and this, again by Proposition \ref{lem_3}, would led $x$ to belong to the clique containing $z_1$ and $z_2$. Finally, if $x\notin C\cup C'$ is $4-$related to a $z\in C\cup C'$, there will be a unique element on $C\cup C'$ $1-$ or $2-$related to $x$. Next, suppose there is $z\in C\cup C'$ $3-$related to $x$. As $p^3_{11}=p^3_{2,2}=0$, $p^3_{12}+p^3_{21}=t^2+1$, similar arguments lead to the same conclusion.

Let $U_x=\{x\}\cup R_2(x)\cup R_4(x)$. We will see that $U_x=U_y$, for any $y\in U_x$. Let $y\in R_2(x)$ and $z\in R_2(y)$. As $p^2_{12}=p^2_{32}=0$, the only possibility is that $(x,z)\in R_2 \cup R_4$. This implies that $z\in R_2(x)\cup R_4(x)$. We get the same conclusion for $z\in R_4(y)$ as $p^2_{14}=p^2_{34}=0$. Since $p^4_{12}=p^4_{32}=p^4_{14}=p^4_{34}=0$, similar arguments can be used when $y\in R_4(x)$. Thus, $U_y=U_x$ as $|U_x|=|U_y|$, from which we may set $U=U_x$.

Consider $z\in U$ and set $U=U_z$. Let $C\cup C'$ any given clique and $(x,y)\in R_1$, $x\in C$, $y\in C'$. As $p^1_{11}=p^1_{1,3}=p^1_{33}=0$, at least one between $x$ and $y$ must be $2-$ or $4-$related  to $z$, say $x$. Therefore, $x\in U_z$, that is $U_x=U$. So, $C$ is contained in $U$, and $C'\cap U=\emptyset$. In conclusion $U$ is a set of points of $\cS^D$ such that very line meets it in $(t+1)/2$ points. Under duality, $\cS$ is a GQ of order $(t^2,t)$ with $U$ a hemisystem in $\cS$.  
\end{proof}

\end{document}